\documentclass[12pt]{amsart}
\usepackage{pifont}
\usepackage{mathrsfs}
\usepackage{geometry}
\usepackage{titletoc}
\usepackage{stix2}


\usepackage{amsmath}
\usepackage{amssymb} 
\usepackage{enumitem} 
\usepackage{mathtools} 
\usepackage[table]{xcolor} 
\usepackage[all]{xy} 
\usepackage{tikz} 
\usepackage{tikz-cd}
\usepackage{indentfirst} 
\usepackage{babel} 
\usepackage{setspace} 

\usepackage[colorlinks,linkcolor=red,anchorcolor=green,citecolor=blue]{hyperref} 
\hypersetup{linktocpage = true} 

\usepackage{rotating} 

\usepackage{ytableau} 
\usepackage{longtable} 
\newcolumntype{M}[1]{>{\centering\arraybackslash}m{#1}} 

\geometry{a4paper,top=1.25in,bottom=1.1in}
\setlength{\textheight}{8.7in}
\setlength{\textwidth}{5.9 in}
\setlength{\oddsidemargin}{0in}
\setlength{\evensidemargin}{0in}

\setlength{\footskip}{0.45in}
\setlength{\headsep}{0.25in}

\usepackage{fancyhdr}


\pagestyle{plain}
\sloppy 










\theoremstyle{plain}
\newtheorem{thm}{Theorem}[section]
\newtheorem{lemma}[thm]{Lemma}
\newtheorem{prop}[thm]{Proposition}
\newtheorem{cor}[thm]{Corollary}
\newtheorem{defn}[thm]{Definition}

\theoremstyle{definition}
\newtheorem{example}[thm]{Example}
\newtheorem{remark}[thm]{Remark}


\newcommand{\btheorem}{\begin{thm}}
	\newcommand{\etheorem}{\end{thm}}
\newcommand{\bproposition}{\begin{prop}}
	\newcommand{\eproposition}{\end{prop}}
\newcommand{\bdefinition}{\begin{defn}}
	\newcommand{\edefinition}{\end{defn}}
\newcommand{\bcorollary}{\begin{cor}}
	\newcommand{\ecorollary}{\end{cor}}
\newcommand{\bproof}{\begin{proof}}
	\newcommand{\eproof}{\end{proof}}
\newcommand{\bremark}{\begin{remark}}
	\newcommand{\eremark}{\end{remark}}
\newcommand{\eexample}{\end{example}}
\newcommand{\bexample}{\begin{example}}

\newcommand{\elemma}{\end{lemma}}
\newcommand{\blemma}{\begin{lemma}}

\newcommand{\p}{\partial}

\renewcommand{\bar}{\overline}

\renewcommand{\phi}{\varphi}

\newcommand{\beq}{\begin{equation}}
\newcommand{\eeq}{\end{equation}}
\newcommand{\ee}{\end{eqnarray*}}
\newcommand{\be}{\begin{eqnarray*}}

\newcommand{\bd}{\begin{enumerate}}
	\newcommand{\ed}{\end{enumerate}}

\renewcommand{\tilde}{\widetilde}

\newcommand{\qtq}[1]{\quad\mbox{#1}\quad}

\newcommand{\Om}{\Omega}

\renewcommand{\S}{{\mathbb S}}


\newcommand{\C}{{\mathbb C}}

\renewcommand{\P}{{\mathbb P}}

\newcommand{\R}{{\mathbb R}}


\setlist[itemize]{leftmargin=*}
\setlist[enumerate]{leftmargin=*}

\numberwithin{equation}{section} 

\setcounter{tocdepth}{1} 


\makeatletter

\usepackage{fancyhdr}
\pagestyle{fancy}

\title{New curvature characterizations for spherical space forms and complex projective spaces}


\author{Xiaokui Yang}

\address{Xiaokui Yang, Department of Mathematics and Yau Mathematical Sciences Center, Tsinghua University, Beijing, 100084, China}
\email{xkyang@mail.tsinghua.edu.cn}

\author{Liangdi Zhang}
\address{Liangdi Zhang, Yanqi Lake Beijing Institute of Mathematical Sciences and Applications, Beijing 101408, China}
\email{zld@bimsa.cn}

\begin{document}
	
	\begin{abstract} In this paper, we introduce  a new positivity notion for  curvature  of Riemannian manifolds and obtain characterizations for spherical space forms and the complex projective space $\C\P^n$. 
	\end{abstract}

	\maketitle

	\section{Introduction}

	It is a fundamental topic in Riemannian geometry  to investigate the relationship between the curvature and the topology of Riemannian manifolds.  The classical work of S. Bochner (\cite{Bochner46}) states that a compact Riemannian manifold with positive Ricci curvature must have  vanishing first Betti number. By using the Bochner technique,     S. Gallot and  D. Meyer   proved in  \cite{GM75} (see also \cite{Meyer1971}) that $n$-dimensional compact Riemannian manifolds with positive curvature operators are rational homology spheres, of which the Betti  numbers $b_p$ $(1\leq p\leq n-1)$ vanish.\\

	In  \cite{Tani1974}, A. Tani proved that a compact orientable conformally flat Riemannian manifold with constant scalar curvature and positive Ricci curvature is a spherical space form. Moreover, S. Tachibana \cite{Tachibana74} showed that a compact orientable Riemannian manifold with positive and harmonic curvature operator is a spherical space form.  By using  the Sacks-Uhlenbeck theory of minimal two-spheres,
	M. J. Micallef and M. D. Moore proved in  \cite{MicallefMoore1988} that a compact simply connected Riemannian manifold of dimension at least four, with positive isotropic curvature is homeomorphic to a sphere. Moreover, M. J. Micallef and M. Y. Wang (\cite{MicallefWang1993}) showed that a closed even-dimensional Reimannian manifold with positive isotropic curvature  has $b_2=0$.  Recently, P. Petersen and M. Wink  established in \cite{PetersenWink2021a} more general vanishing theorems and estimation theorems for the $p$-th Betti number of closed $n$-dimensional Riemannian manifolds with a lower bound on the average of the lowest $(n-p)$ eigenvalues of the curvature operator.\\
	
	In 1982, R.  Hamilton  introduced in \cite{Hamilton1982} the Ricci flow method and proved that a $3$-dimensional compact manifold with positive Ricci curvature is a spherical space form.   He  also established in \cite{Hamilton1986} that a $4$-dimensional compact manifold with positive curvature operator is diffeomorphic to a spherical space form. Under the condition of $2$-positive curvature operators, H. Chen (\cite{ChenH1991}) and C. B\"ohm and B. Wilking (\cite{BohmWilking2008}) established the same result in dimension $4$ and  higher dimension respectively. By using \cite{BohmWilking2008},   L. Ni and B. Wu proved in \cite{NiWu2007} that compact Riemannian manifolds with $2$-quasi-positive curvature operators are diffeomorphic to spherical space forms. S. Brendle and R. Schoen confirmed the differentiable sphere theorem in \cite{BrendleSchoen2009}  that a compact Riemannian manifold of dimension $n\geq4$ with strictly $1/4$-pinched sectional curvature is diffeomorphic to a spherical space form.  S. Brendle (\cite{Brendle2010}) proved that a compact Einstein manifold of dimension $n\geq4$ with positive isotropic curvature is a spherical space form. By using the Ricci flow method, compact Riemannian manifolds with positive isotropic curvature have been classified by Hamilton (\cite{Hamilton1997}), Chen-Zhu (\cite{ChenZhu2006}) and Chen-Tang-Zhu (\cite{ChenTangZhu2012}) in dimension $n = 4$. For  the classification when $n\geq 12$, we refer to  Brendle (\cite{Brendle2019}) and  Huang (\cite{Huang2023}).\\


	The first author proved in  \cite{Yang2018}  that a compact K\"ahler manifold of complex dimension $n$ with positive holomorphic sectional curvature is RC-positive and the Hodge numbers $h^{p,0}=0$ for $1\leq p\leq n$ and hence it is projective and rationally connected, which answered affirmatively a conjecture of S. -T. Yau (\cite[Problem~47]{yaup}). Afterwards, L. Ni and F. Zheng  obtained similar results in \cite{NiZheng20a} under the condition of positive orthogonal Ricci curvature. They also proved in \cite{NiZheng20b}  that a compact K\"ahler manifold with positive 2nd-scalar curvature satisfies $h^{2,0}=0$ and it must be projective. More recently, P. Petersen and M. Wink  established a series of vanishing theorems and estimation results in \cite{PetersenWink2021b} for Hodge numbers on compact K\"ahler manifolds with some positive conditions on the eigenvalues of the K\"ahler curvature operator.\\

	Since  Siu-Yau's classical work (\cite{SiuYau1980}) on the solution  to the  Frankel conjecture (see also \cite{Mori1979}), it becomes apparent that the positivity of the tangent bundle of a compact K\"ahler manifold carries important geometric information. In the past decades, many important generalizations have been established. 	For instance, N. Mok  classified compact K\"ahler manifolds with semipositive holomorphic bisectional curvature in \cite{Mok1988}. In  \cite{Gu2009} ,  H. Gu gave a simple and transcendental proof for Mok's theorem by using the K\"ahler-Ricci flow. Moreover, H.  Gu and Z.  Zhang  proved in \cite{GuZhang2010} an extension of the result of \cite{Mok1988} under the condition of nonnegative orthogonal holomorphic bisectional curvature.  H. Feng, K. Liu and X. Wan  gave a new proof in \cite{FengLiuWan2017} that any $n$-dimensional compact K\"ahler manifold with positive orthogonal bisectional curvature must be biholomorphic to $\mathbb{C}\P^n$. For more results on the geometry and topology of various curvature notions,  we refer to \cite{Che07},  \cite{Wil13}, \cite{Li23} and \cite{CGT23} the references therein. \\

	In 2002, P. Guan, J. Viaclovsky and G. Wang \cite{GuanViaclovskyWang2002} proved that the Ricci curvature of an $n$-dimensional Riemannian manifold is positive if the eigenvalues of the Schouten tensor are in  certain cone $\Gamma_k^+$. 
	As an application, they showed that such a compact locally conformally flat Riemannian manifold is conformally equivalent to a spherical space form. Similar rigidity results were obtained in \cite{GuanLinWang2004,GuanWang2004,Gursky1994}. Moreover, P. Guan, C.  Lin and G. Wang \cite{GuanLinWang2005} also proved a cohomology vanishing theorem on compact locally conformally flat Riemannian manifold under certain cone assumption on the Schouten tensor.\\

Let $\Lambda=(a_1,\cdots,a_N)\in\mathbb{R}^N$ and  $\sigma_k$ be the $k$-th elementary symmetric function
$$\sigma_k(\Lambda)=\sum_{i_1<\cdots<i_k}a_{i_1}\cdots a_{i_k}.$$
The $\Gamma_k^+$ cone is defined as
\beq \Gamma_k^+:=\{\Lambda\in\mathbb{R}^N|\ \sigma_j(\Lambda)>0,\ \qtq{for all} j\leq k\}.\eeq 
Let $\bar{\Gamma}_k^+$  be  the closure of $\Gamma_k^+$. Hence 	$\Lambda \in \bar\Gamma^+_k$ if and only if  $\sigma_j(\Lambda)\geq 0$ for all $j\leq k$. It is  easy to see that $\Gamma_N^+\subset \cdots \subset \Gamma_1^+$ and
$\Lambda=(a_1,\cdots,a_N) \in \Gamma^+_N$ if and only if $a_i>0$ for all $i$.\\
	
	In this paper, we study a version of shifted cone on Riemannian manifolds. Let $(M,g)$ be a compact Riemannian manifold of dimension $n\geq 3$. Suppose that $$\lambda_1\leq \cdots \leq \lambda_N$$ are the eigenvalues of the curvature operator $\mathscr R$ on $\Lambda^2TM$ where $N=\frac{n(n-1)}{2}$. If $R$ is the scalar curvature of $(M,g)$, then  $R=2\sum\lambda_i$. For a fixed $\alpha\in \R^+\cup \{0\}$ and $\Lambda=(\lambda_1,\cdots, \lambda_N)$, we define
	 a shifted point in $\R^N$
	\beq \Lambda_{\alpha}:=\Lambda-\alpha \left(\frac{R}{2}, \frac{R}{2},\cdots, \frac{R}{2}\right). \eeq 
	It is obvious that $\Lambda_\alpha=\Lambda$ if $\alpha=0$.
	
	\bdefinition For a  fixed $j\in\{1,2,\cdots,N\}$, we say $g\in\Gamma_j^+(\alpha)$ if $\Lambda_{\alpha}\in \Gamma_j^+$ for all $x\in M$. Similarly,  $g\in\bar \Gamma_j^+(\alpha)$  if $\Lambda_{\alpha}\in \bar \Gamma_j^+$ for all $x\in M$. 
	 \edefinition 	

\noindent	For instance, $g\in\Gamma_1^+(\alpha)$ if and only if \beq \sigma_1\left(\Lambda_\alpha\right)=\sum_{i=1}^N  \left(\lambda_i-\frac{R\alpha}{2}\right)=\frac{(1-\alpha N)R}{2}>0.\eeq 
For  $g\in\Gamma_2^+(\alpha)$,  there is an extra constraint
\beq \sigma_2\left(\Lambda_\alpha\right)=\sum_{1\leq i<j\leq N}  \left(\lambda_i-\frac{R\alpha}{2}\right) \left(\lambda_j-\frac{R\alpha}{2}\right)>0.\eeq 
We will show that for certain  $\alpha$ with geometric meaning,  the positivity of  $\sigma_1\left(\Lambda_\alpha\right)$ and $ \sigma_2\left(\Lambda_\alpha\right)$ can carry important geometric information of $(M,g)$.\\

The classical Weitzenb\"ock formula shows that 	
for any $\omega\in \Om^k(M)$,
\beq  \Delta \omega=D^*D\omega+ dx^i\wedge I_j \Theta \left(\frac{\p}{\p x^i}, \frac{\p}{\p x^j}\right)\omega \label{wei}\eeq
where $\Theta$ is a version of the curvature tensor of the vector bundle  $\wedge^k T^*M$. By using the curvature positivity and formula (\ref{wei}), various rigidity theorems and vanishing theorems are established.  In this paper, we consider the curvature positivity given by the shifted cone $\Gamma_2^+(\alpha)$ with specified parameter $\alpha \in \R^+$. To this end,  we define 
	\beq \alpha_2:=\frac{1}{N}-\frac{1}{N}\sqrt{\frac{2}{(N-1)(N-2)}}. \label{alpha2}\eeq 
The first result of this paper is the following characterization of spherical  space forms.
\btheorem\label{maintheorem1} Let $(M,g)$ be a compact Riemannian manifold of dimension $n\geq 3$. If $g\in {\Gamma}_2^+(\alpha_2)$, then  $M$ is diffeomorphic to a spherical space form.
\etheorem

\noindent This particular number $\alpha_2$ is picked out according to the example $\S^2\times \S^1$ with the canonical product metric $g$. In this case, $g$ lies on the boundary of the cone $\bar \Gamma_2^+(\alpha_2)$  $$\sigma_1(\Lambda_{\alpha_2})>0 \qtq{and} \sigma_2(\Lambda_{\alpha_2})=0.$$
Actually, we obtain a concrete characterization of the  shifted cone $\bar\Gamma_2^+(\alpha_2)$.

\btheorem\label{maintheorem2}
 Let $(M,g)$ be a compact Riemannian manifold of dimension $n\geq 3$. If $g\in \bar \Gamma_2^+(\alpha_2)$, then one of the following holds \bd
\item  $M$ is diffeomorphic to  $\mathbb S^2\times \mathbb S^1$ or $\mathbb R\mathbb P^2\times \S^1$. 
 \item  $(M,g)$ is flat.
\item  $M$ is diffeomorphic to a spherical space form.
\ed
\etheorem

\noindent  More generally,  by considering  $\S^{k}\times \S^1$ with the canonical product metric, we define 
	\beq \alpha_k:=\frac{1}{N}-\frac{1}{N}\sqrt{\frac{k}{(N-1)(N-k)}}, \label{alphak}\eeq 
	and obtain the following curvature positivity and vanishing theorem. 
\btheorem\label{maintheorem3}
 Let $(M,g)$ be a compact Riemannian manifold of dimension $n\geq 3$.  If $g\in {\Gamma}_2^+(\alpha_k)$ for some $1\leq k\leq N-1$,  then  the curvature operator $\mathscr R$ of $(M,g)$ is $k$-positive:
 $$\lambda_1+\cdots+\lambda_k>0.$$
  In particular, \bd
 	\item if $1\leq k\leq\lceil\frac{n}{2}\rceil$, then $b_p(M)=0$ for all $1\leq p\leq n-1$;
 	\item if $\lceil\frac{n}{2}\rceil+1\leq k\leq n-1$, then $$b_1(M)=\cdots=b_{n-k}(M)=0 \qtq{and} b_{k}(M)=\cdots =b_{n-1}(M)=0.$$
 \ed
\etheorem
\noindent 
The vanishing of Betti numbers in Theorem \ref{maintheorem3}   follows from the curvature positivity and the recent work \cite{PetersenWink2021a} of Petersen-Wink. 
	\vskip 1\baselineskip
	
	Let $(M,g)$ be a K\"ahler manifold of complex dimension $n$. It is well-known that by the $J$-invariant property, the curvature operator of the underlying Riemannian manifold $(M,g)$ vanishes on the orthogonal complement of the holonomy algebra $\mathfrak u(n)\subset \mathfrak{so}(2n)$. Hence, the Riemannian curvature operator of a Kähler manifold has a kernel of real
	dimension at least $n(n-1)$. Now we consider the K\"ahler curvature operator  $\mathscr R=\mathscr R|_{\mathfrak{u}(n)}$. Let 
	$$\rho_1\leq \cdots \leq \rho_{n^2}$$
	be the real eigenvalues of the K\"ahler curvature operator. Let $\Lambda=(\rho_1,\cdots, \rho_{n^2})$ and $T=\rho_1+\cdots +\rho_{n^2}$. For
	$k\in\{1,2,\cdots,n^2-1\}$ , we define \beq \beta_k:=\frac{1}{n^2}-\frac{1}{n^2}\sqrt{\frac{k}{(n^2-1)(n^2-k)}}, \eeq and the shifted point \beq \Lambda _{\beta_k}:=\Lambda-\beta_k(T,\cdots,T)\in \R^{n^2}. \eeq 
 We call $g\in \Gamma_2^+(\beta_k)$ if $\Lambda_{\beta_k}\in \Gamma_2^+$ for each point $x\in M$. As analogous to Theorem \ref{maintheorem1}, we obtain a characterization of $\C\P^n$.
 
	\btheorem\label{main4} Let $(M,g)$ be a compact K\"ahler manifold with complex dimension $n\geq 2$. If $g\in \Gamma_2^+(\beta_2)$,  then $M$ is biholomorphic to $\C\P^n$.
	\etheorem
	
	\noindent We also classify compact K\"ahler manifolds with $g\in \bar \Gamma_2^+(\beta_2)$:

	\btheorem\label{maintheorem5}
Let $(M,g)$ be a compact K\"ahler manifold with complex dimension $n\geq 2$. If $g\in \bar \Gamma_2^+(\beta_2)$, then one of the following holds \bd
	\item  $M$ is biholomorphic to  $\C\P^1\times \C\P^1$;
	\item  $(M,g)$ is a flat torus;
	\item  $M$ is biholomorphic to $\C\P^n$.
	\ed
	\etheorem

	\vskip 1\baselineskip 
	
	\noindent \textbf{Ackowledgements}. The authors would like to thank B.-L. Chen and P. Wu for helpful discussions. The first named author is supported by National Key R\&D Program of China 2022YFA1005400 and NSFC grants  (No. 12325103, No. 12171262 and  No. 12141101).

\vskip 2\baselineskip

	\section{Positivity of curvature operators}
	
		In this section, we prove the following result and its K\"ahler analog Theorem \ref{thm00}. As applications, we establish Theorem \ref{maintheorem1}, Theorem \ref{maintheorem2}, Theorem \ref{maintheorem3}, Theorem \ref{main4} and Theorem \ref{maintheorem5}.
	\begin{thm}\label{thm0}
		Let $(M,g)$ be a Riemannian manifold with dimension $n\geq 3$ and
		$$\lambda_1\leq \cdots \leq \lambda_N$$  be the eigenvalues of the curvature operator $\mathscr R$ on the vector space $\Lambda^2T_xM$ where $N={n\choose 2}$ and $x\in M$. Fix $k\in\{1,2,\cdots, N-1\}$.
		\bd \item If $g\in\Gamma_2^+(\alpha_k)$ at point $x\in M$, then $(M,g)$ has $k$-positive curvature operator at $x$: $$\lambda_{1}(x)+\lambda_{2}(x)+\cdots+\lambda_k(x)>0.$$
		\item If $g\in \overline\Gamma_2^+(\alpha_k)$ at point $x\in M$, then one of the followings holds:
		\bd\item $\lambda_{1}(x)+\lambda_{2}(x)+\cdots+\lambda_k(x)>0$.
		\item	$\lambda_{1}(x)=\lambda_{2}(x)=\cdots=\lambda_k(x)=0$ and $\lambda_{k+1}(x)=\lambda_{k+2}(x)=\cdots=\lambda_{N}(x)\geq0$.
		\ed
		
		\ed
	\end{thm}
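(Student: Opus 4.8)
The plan is to reduce the whole statement to a single elementary inequality for an ordered tuple of real numbers. First I would rewrite the two symmetric functions of the shifted vector $\Lambda_{\alpha_k}$ in terms of $S:=\sum_{i=1}^N\lambda_i$ (so the scalar curvature is $R=2S$) and $Q:=\sum_{i=1}^N\lambda_i^2$. Using $\sigma_1(\Lambda_\alpha)=(1-N\alpha)S$ and $\sigma_2(\Lambda_\alpha)=\tfrac12\big(\sigma_1(\Lambda_\alpha)^2-\sum_{i=1}^N(\lambda_i-\alpha S)^2\big)$, a direct expansion gives
\[
\sigma_2(\Lambda_\alpha)=\tfrac12\Big(S^2\big(1-2(N-1)\alpha+N(N-1)\alpha^2\big)-Q\Big).
\]
The role of $\alpha_k$ from \eqref{alphak} is to make the coefficient collapse: writing $\alpha_k=\tfrac1N(1-c)$ with $c:=\sqrt{\tfrac{k}{(N-1)(N-k)}}$, one checks $1-N\alpha_k=c>0$ and $1-2(N-1)\alpha_k+N(N-1)\alpha_k^2=\tfrac1{N-k}$. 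Hence at the point $x$,
\[
\sigma_1(\Lambda_{\alpha_k})=cS,\qquad \sigma_2(\Lambda_{\alpha_k})=\tfrac12\Big(\tfrac{S^2}{N-k}-Q\Big),
\]
so $g\in\Gamma_2^+(\alpha_k)$ at $x$ amounts exactly to $S>0$ and $S^2>(N-k)Q$, while $g\in\overline\Gamma_2^+(\alpha_k)$ at $x$ amounts to $S\ge0$ and $S^2\ge(N-k)Q$.

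Next I would isolate the combinatorial heart as a lemma: \emph{if $\lambda_1\le\cdots\le\lambda_N$ are reals with $S>0$ and $\lambda_1+\cdots+\lambda_k\le0$, then $S^2\le(N-k)Q$, with equality if and only if $\lambda_1=\cdots=\lambda_k=0$ and $\lambda_{k+1}=\cdots=\lambda_N$.} To prove it, put $A:=\lambda_1+\cdots+\lambda_k\le0$ and $B:=\lambda_{k+1}+\cdots+\lambda_N$; then $S=A+B>0$ forces $B>-A=|A|$. Cauchy--Schwarz on the two blocks gives $Q\ge\tfrac{A^2}{k}+\tfrac{B^2}{N-k}$, whence
\[
(N-k)Q-S^2\ \ge\ \tfrac{N-k}{k}A^2+B^2-(A+B)^2\ =\ A\Big(\tfrac{N-2k}{k}A-2B\Big).
\]
Since $A\le0$, we have $\tfrac{N-2k}{k}A=\tfrac{N}{k}A-2A\le-2A=2|A|<2B$, so the factor $\tfrac{N-2k}{k}A-2B$ is strictly negative; as $A\le0$, the product is $\ge0$, which proves the inequality. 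Equality then forces $A=0$ and equality in both Cauchy--Schwarz steps, i.e.\ $\lambda_1=\cdots=\lambda_k=0$ and $\lambda_{k+1}=\cdots=\lambda_N$, the common value being $\tfrac{S}{N-k}>0$.

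With the lemma in hand, the theorem follows quickly. For the first assertion: if $g\in\Gamma_2^+(\alpha_k)$ at $x$, then $S>0$ and $S^2>(N-k)Q$; were $\lambda_1+\cdots+\lambda_k\le0$, the lemma would give $S^2\le(N-k)Q$, a contradiction, so $\lambda_1+\cdots+\lambda_k>0$. For the second assertion: if $g\in\overline\Gamma_2^+(\alpha_k)$ at $x$, then $S\ge0$ and $S^2\ge(N-k)Q$. If $S=0$, then $0=S^2\ge(N-k)Q\ge0$ forces $Q=0$, so every $\lambda_i=0$, which is the second alternative. If $S>0$ and $\lambda_1+\cdots+\lambda_k\le0$, the lemma gives $S^2\le(N-k)Q$, hence $S^2=(N-k)Q$, and we are in its equality case: $\lambda_1=\cdots=\lambda_k=0$ and $\lambda_{k+1}=\cdots=\lambda_N=\tfrac{S}{N-k}\ge0$ --- again the second alternative. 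Otherwise $\lambda_1+\cdots+\lambda_k>0$.

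The substantive step is the lemma, and within it the equality discussion: one must carry the strict inequalities ($B>|A|$, coming from $S>0$, and the strict negativity of the factor $\tfrac{N-2k}{k}A-2B$) all the way through the Cauchy--Schwarz estimates so that the equality case collapses to exactly the form required in part (2). The remainder --- the algebraic identity $1-2(N-1)\alpha_k+N(N-1)\alpha_k^2=\tfrac1{N-k}$ and the two one-line deductions --- is routine bookkeeping.
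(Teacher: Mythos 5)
Your proof is correct, and it takes a genuinely different route from the paper's. The paper normalizes $T=\sum\lambda_i=N-k$, takes $A_{\Lambda_0}$ to be the shifted vector of $\S^k\times\S^1$ (which lies on the boundary of $\bar\Gamma_2^+$), and studies the quadratic $f(t)=\sigma_2\bigl(tA_{\Lambda_{\alpha_k}}+(1-t)A_{\Lambda_0}\bigr)$; convexity of $\bar\Gamma_2^+$ gives $f\ge0$ on $[0,1]$, $f(0)=0$ forces $f'(0)=\sum_{i=1}^k\lambda_i\ge0$, and when $f'(0)=0$ a computation of $f''(0)$ as a negative sum of squares forces the rigidity case. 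You instead collapse the algebra up front: the identity $1-2(N-1)\alpha_k+N(N-1)\alpha_k^2=\tfrac1{N-k}$ turns membership in $\Gamma_2^+(\alpha_k)$ into the clean pair $S>0$, $S^2>(N-k)Q$, and your two-block Cauchy--Schwarz lemma (with the strict bound $B>|A|$ coming from $S>0$, and the strictly negative factor $\tfrac{N-2k}{k}A-2B$ controlling the equality discussion) does the rest. Your approach is more elementary and self-contained — no appeal to convexity of the $\sigma_2$-cone — and it also transparently covers the endpoint $k=N-1$, which the paper's Lemma~\ref{lem1} and Corollary~\ref{alpha2cone} formally restrict to $k\le N-2$ even though Theorem~\ref{thm0} allows $k\le N-1$. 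The paper's variational argument, on the other hand, makes the geometric role of $\S^k\times\S^1$ as a boundary model explicit, which is conceptually pleasant and matches the way $\alpha_k$ was introduced. I verified your key algebraic reductions (the collapse of the coefficient of $S^2$ to $\tfrac1{N-k}$, the factorization $(N-k)Q-S^2\ge A\bigl(\tfrac{N-2k}{k}A-2B\bigr)$, and the equality analysis); they are all correct.
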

	
	\vskip 1\baselineskip
	
	\noindent 
	Recall that for $\Lambda=(\lambda_1,\cdots, \lambda_N)\in \R^N$, the shifted point $\Lambda_{\alpha_k}$ is defined as 
	\beq \Lambda_{\alpha_k}=\Lambda-\alpha_k\left(\sum_i \lambda_i, \cdots, \sum_i \lambda_i\right), \eeq 
	where
		\beq \alpha_k=\frac{1}{N}-\frac{1}{N}\sqrt{\frac{k}{(N-1)(N-k)}},\eeq 
	and for $1\leq j\leq N$, the shifted cone $\Gamma_j^+(\alpha_k)$ is defined as 
	\beq \Gamma_j^+(\alpha_k)=\{\Lambda\in \R^N\ |\ \Lambda_{\alpha_k}\in \Gamma_j^+ \}. \eeq 
	
	\bproposition  We have the following properties.
		\bd
			\item If $1\leq k\leq N-1$, then $\Gamma_N^+(\alpha_k)\subset\cdots\subset\Gamma_2^+(\alpha_k)\subset\Gamma_1^+(\alpha_k)$.
		
			\item If $1\leq k_1\leq k_2\leq N-2$, then  $\Gamma_2^+(\alpha_{k_1})\subset \Gamma_2^+(\alpha_{k_2})$  and  $\bar \Gamma_2^+(\alpha_{k_1})\subset \bar \Gamma_2^+(\alpha_{k_2})$.
		\ed
	\eproposition
	\bproof The first part follows from  the definition. For the second part,  let $T=\sum\limits_{i=1}^N\lambda_i$. For any $k\in\{1,2,\cdots,N-2\}$, 
$$\sigma_1(\Lambda_{\alpha_k})=\sum_{i=1}^N(\lambda_i-\alpha_kT)=(1-N\alpha_k)T=T\sqrt{\frac{k}{(N-1)(N-k)}}$$
	which is increasing in $k$. On the other hand, 
\be 	2\sigma_2(\Lambda_{\alpha_k})&=&(\sigma_1(\Lambda_{\alpha_k}))^2-\sum_{i=1}^N(\lambda_i-\alpha_k T)^2\notag\\
		&=&(1-N\alpha_k)^2T^2-\sum_{i=1}^N\lambda_i^2+2T^2\alpha_k-NT^2\alpha_k^2\notag\\
		&=&T^2(1-2(N-1)\alpha_k+N(N-1)\alpha_k^2)-\sum_{i=1}^N\lambda_i^2.
\ee
Since $N\alpha_k<1$ and $\alpha_k$ is decreasing in $k$, we conclude that $\sigma_2(\Lambda_{\alpha_k})$ is increasing in $k$. Therefore, $\Gamma_2^+(\alpha_{k_1})\subset \Gamma_2^+(\alpha_{k_2})$.
	\eproof

	\bexample\label{examplek}  Let $g$ be the canonical product metric on $\S^k\times \S^1$ with $k\geq 2$. Then $\sigma_1(\Lambda_{\alpha_k})>0$ and $\sigma_2(\Lambda_{\alpha_k})=0$. Indeed, the curvature operator of $\S^k\times \S^1$ has eigenvalues
	$$\Lambda=(0,\cdots, 0, 1, \cdots, 1)\in \R^k\times \R^{N-k}.$$
Define 
$$a=-\alpha_k(N-k)=-\frac{N-k}{N}+\frac{1}{N}\sqrt{\frac{k(N-k)}{N-1}},\ \ \ b=a+1=\frac{k}{N}+\frac{1}{N}\sqrt{\frac{k(N-k)}{N-1}}.$$ Hence, we have  $$\Lambda_{\alpha_k}=\Lambda-\alpha_k (N-k,\cdots, N-k)=(a,\cdots,a,b,\cdots,b).$$ A straightforward computation shows
\begin{eqnarray*}
	\sigma_1(\Lambda_{\alpha_k})=ka+(N-k)b=\sqrt{\frac{k(N-k)}{N-1}}>0.
\end{eqnarray*}
Moreover, 
\begin{eqnarray*}
	2\sigma_2(\Lambda_{\alpha_k})&=&\left(\sigma_1(\Lambda_{\alpha_k})\right)^2-\left(ka^2+(N-k)b^2\right)\\
	&=&\frac{k(N-k)}{N-1}-(k(b-1)^2+(N-k)b^2)\\
	&=&\frac{k(N-k)}{N-1}-\left((Nb-2k)b+k\right)=0.
\end{eqnarray*}	
	Hence, $g\in \bar\Gamma_2^+(\Lambda_{\alpha_k})$.
	\eexample

	We have the following result.
	\begin{lemma}\label{lem1} For $1\leq k\leq N-2$,	
		if $\Lambda\in\bar{\Gamma}_2^+(\alpha_k)$, then
		\beq \label{4.000}
		\lambda_{1}+\lambda_{2}+\cdots+\lambda_k\geq0.
		\eeq 
	\noindent Moreover, exactly one of the followings holds.
		\bd \item   $\lambda_{1}+\lambda_{2}+\cdots+\lambda_k>0$.
			\item	$\lambda_{1}=\lambda_{2}=\cdots=\lambda_k=0$ and $\lambda_{k+1}=\lambda_{k+2}=\cdots=\lambda_{N}\geq0$.
		\ed
	\end{lemma}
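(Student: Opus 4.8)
The plan is to convert the hypothesis $\Lambda\in\bar\Gamma_2^+(\alpha_k)$ into two elementary scalar inequalities and then argue combinatorially with the ordered eigenvalues. First I would reuse the two identities already obtained in the proof of the preceding proposition: writing $T=\sum_{i=1}^N\lambda_i$,
\begin{align*}
\sigma_1(\Lambda_{\alpha_k})&=(1-N\alpha_k)T,\\
2\sigma_2(\Lambda_{\alpha_k})&=\bigl(1-2(N-1)\alpha_k+N(N-1)\alpha_k^2\bigr)T^2-\sum_{i=1}^N\lambda_i^2.
\end{align*}
The point of the specific constant $\alpha_k=\tfrac1N-\tfrac1N\sqrt{\tfrac{k}{(N-1)(N-k)}}$ is that $u:=1-N\alpha_k=\sqrt{\tfrac{k}{(N-1)(N-k)}}>0$, while a routine substitution gives $1-2(N-1)\alpha_k+N(N-1)\alpha_k^2=\tfrac{1+(N-1)u^2}{N}=\tfrac{1}{N-k}$ (using $(N-1)u^2=\tfrac{k}{N-k}$). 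Hence $\Lambda\in\bar\Gamma_2^+(\alpha_k)$ is \emph{equivalent} to
\[
T\ge 0\qquad\text{and}\qquad\sum_{i=1}^N\lambda_i^2\le\frac{T^2}{N-k},
\]
and note $N-k\ge 2$ since $k\le N-2$.

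With this reformulation in hand I would set $A:=\lambda_1+\cdots+\lambda_k$ and $B:=\lambda_{k+1}+\cdots+\lambda_N$, so $A+B=T$. Because $\lambda_{k+1},\dots,\lambda_N$ are the $N-k$ largest eigenvalues, the crude bounds $B\ge(N-k)\lambda_k$ and $A\le k\lambda_k$ give $kB\ge(N-k)A$, hence $NB\ge(N-k)T\ge0$, so in particular $B\ge0$ and $T+B\ge0$. Splitting $\sum_i\lambda_i^2$ and applying the power-mean (Cauchy--Schwarz) inequality $\sum x_i^2\ge(\sum x_i)^2/m$ to the last $N-k$ entries,
\[
\frac{T^2}{N-k}\ \ge\ \sum_{i=1}^N\lambda_i^2\ \ge\ \sum_{i=1}^k\lambda_i^2+\frac{B^2}{N-k},
\]
so $\sum_{i=1}^k\lambda_i^2\le\frac{T^2-B^2}{N-k}=\frac{A(T+B)}{N-k}$. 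The left-hand side is $\ge0$ and $T+B\ge0$, so $A(T+B)\ge0$; if $A<0$ this forces $T+B=0$, hence $T=B=0$ and then $A=T-B=0$, a contradiction. This proves $\lambda_1+\cdots+\lambda_k=A\ge0$, i.e.\ \eqref{4.000}.

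For the dichotomy I would observe that the two listed alternatives are mutually exclusive (the second makes $A=0$, the first makes $A>0$) and that one of them must hold once $A\ge0$; so it suffices to show $A=0$ implies $\lambda_1=\cdots=\lambda_k=0$ and $\lambda_{k+1}=\cdots=\lambda_N\ge0$. If $A=0$ the displayed chain gives $\sum_{i=1}^k\lambda_i^2\le\frac{A(T+B)}{N-k}=0$, so $\lambda_1=\cdots=\lambda_k=0$; then $T=B$ and the quadratic constraint reads $\sum_{i=k+1}^N\lambda_i^2\le\frac{B^2}{N-k}$, which together with the reverse power-mean inequality forces equality there, i.e.\ $\lambda_{k+1}=\cdots=\lambda_N$, with common value $\frac{B}{N-k}=\frac{T}{N-k}\ge0$.

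I expect the only genuinely fiddly point to be the algebraic verification that the coefficient of $T^2$ in $2\sigma_2(\Lambda_{\alpha_k})$ collapses to exactly $\frac1{N-k}$; this is precisely what the otherwise mysterious formula for $\alpha_k$ is designed to achieve, and it is what makes the sum-of-squares constraint clean enough for the rest of the argument to be pure Cauchy--Schwarz bookkeeping. The one spot where the ordering $\lambda_1\le\cdots\le\lambda_N$ (rather than merely $T\ge0$) is really used is the estimate $B\ge0$; everything else follows from the power-mean inequality and its equality case.
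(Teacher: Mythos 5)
Your argument is correct, and it is genuinely different from the paper's. The paper also begins by expanding $\sigma_1$ and $\sigma_2$ at the shifted point, but then it interpolates: it sets $\Lambda_0=(0,\dots,0,1,\dots,1)$, checks (Example 2.3) that the corresponding shifted point $A_{\Lambda_0}$ lies on the boundary $\sigma_2=0$ of $\bar\Gamma_2^+$, and studies the quadratic $f(t)=\sigma_2\bigl(tA_{\Lambda}+(1-t)A_{\Lambda_0}\bigr)$. Convexity of $\bar\Gamma_2^+$ gives $f(t)\ge0$ on $[0,1]$ with $f(0)=0$, forcing $f'(0)\ge0$; a direct computation identifies $f'(0)=\lambda_1+\cdots+\lambda_k$, and in the degenerate case $f'(0)=0$ one reads off the rigidity from $f''(0)\ge 0$. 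You instead isolate the effect of the constant $\alpha_k$ by reducing the hypothesis to the two scalar inequalities $T\ge0$ and $\sum\lambda_i^2\le T^2/(N-k)$, and then run a self-contained Cauchy--Schwarz/power-mean argument, using the ordering explicitly only to establish $B\ge0$. Your route is more elementary (no appeal to convexity of the cone or to the auxiliary boundary point $A_{\Lambda_0}$) and makes the role of $\alpha_k$ transparent: it is exactly the value that collapses the $T^2$-coefficient to $1/(N-k)$. The paper's route, while slightly more indirect, fits into the standard toolbox for G\aa rding cones and generalizes more readily if one wants to probe higher $\sigma_j$; it also bundles inequality and rigidity into the single computation of $f'(0)$ and $f''(0)$. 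Both proofs are valid and lead to the same dichotomy, including the identification of the equality configuration $\lambda_1=\cdots=\lambda_k=0$, $\lambda_{k+1}=\cdots=\lambda_N\ge 0$.
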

	
	\begin{proof} For simplicity, we set $T=\sum\limits_{i=1}^N \lambda_i$. Since $\Lambda\in\bar{\Gamma}_2^+(\alpha_k)$, we have
		$$\sigma_1(\Lambda_{\alpha_k})=(1-N\alpha_k)T\geq 0,\ \ \  2\sigma_2(\Lambda_{\alpha_k})=\left(\sigma_1(\Lambda_{\alpha_k})\right)^2-\sum_{i=1}^N\left(\lambda_i-\alpha_kT\right)^2\geq 0.$$
		Since $N\alpha_k<1$, it follows that either  \bd  \item $T=0$ and so $\lambda_1=\cdots =\lambda_N=\alpha_kT=0$; or 
			\item $T>0$.
		\ed
		We  consider the case  $T>0$.  Without loss of generality, we assume  
		\begin{equation}\label{eq8}
		T=\sum_{i=1}^N\lambda_i=N-k.
		\end{equation}
		Otherwise, we replace $\Lambda$ by $s\Lambda$ for some $s>0$.
		Define 
		$$a=-\alpha_k(N-k)=-\frac{N-k}{N}+\frac{1}{N}\sqrt{\frac{k(N-k)}{N-1}},\ \ \ b=a+1=\frac{k}{N}+\frac{1}{N}\sqrt{\frac{k(N-k)}{N-1}},$$
		and $ \Lambda_0=(0,\cdots,0, 1\cdots,1)\in\mathbb{R}^{k}\times\mathbb{R}^{N-k}$. Hence, we have  $$A_{\Lambda_0}:=\Lambda_0-\alpha_k T(1,\cdots, 1)=(a,\cdots,a,b,\cdots,b).$$ We computed in Example \ref{examplek} that 
		$$
			\sigma_1(A_{\Lambda_0})=ka+(N-k)b=\sqrt{\frac{k(N-k)}{N-1}}>0,\ \ \ \sigma_2(A_{\Lambda_0})=0. $$ Hence, $A_{\Lambda_0}\in\bar{\Gamma}_2^+.$\\
		
		We rewrite $\Lambda_{\alpha_k}=(a_1,\cdots,a_N)$ and define $$	A_{\Lambda_t}:=t\Lambda_{\alpha_k}+(1-t)A_{\Lambda_0},\ \  \text{and}\  f(t):=\sigma_2(A_{\Lambda_t}).$$  Since $A_{\Lambda_0}$, $\Lambda_{\alpha_k}\in\bar{\Gamma}_2^+$, the convexity of the cone $\bar{\Gamma}_k^+$ gives that $A_{\Lambda_t}\in\bar{\Gamma}_k^+$ for any $t\in[0,1]$, i.e., $f(t)\geq0$ for any $t\in[0,1]$. 
		It is clear that
		\begin{equation}\label{eq4.2}
		f(0)=\sigma_2(A_{\Lambda_0})=0.
		\end{equation}
		For any $1\leq i\leq N$, Let $(A_{\Lambda_0}|i)$ be the $(N-1)$-vector with the $i$-th component of $A_{\Lambda_0}$ removed. One can see that
		\begin{eqnarray}\label{eq4.03}
		\sigma_1(A_{\Lambda_0}|1)=\cdots=\sigma_1(A_{\Lambda_0}|k)=\sigma_1(A_{\Lambda_0})-a=\sigma_1(A_{\Lambda_0})-b+1
		\end{eqnarray}
		and 
		\begin{eqnarray}\label{eq4.003}
		\sigma_1(A_{\Lambda_0}|k+1)=\cdots=\sigma_1(A_{\Lambda_0}|N)
		&=&\sigma_1(A_{\Lambda_0})-b.
		\end{eqnarray}
		Since 
		\begin{eqnarray*}
			A_{\Lambda_t}=(a+(a_1-a)t,\cdots,a+(a_k-a)t,b+(a_{k+1}-b)t,\cdots,b+(a_N-b)t),
		\end{eqnarray*} one has 
		\begin{eqnarray}\label{eq4.4}
		f'(0)&=&\sum_{i=1}^{k}(a_i-a)\sigma_1(A_{\Lambda_0}|i)+\sum_{j=k+1}^N(a_j-b)\sigma_1(A_{\Lambda_0}|j)\notag\\
		&=&\left(\sum_{i=1}^{k}a_i-ka\right)(\sigma_1(A_{\Lambda_0}|1)-\sigma_1(A_{\Lambda_0}|N))\notag\\
		&=&\left(\sum_{i=1}^{k}a_i\right)-ka=\sum_{i=1}^k \lambda_i,
		\end{eqnarray}
		where in the second identity we use the facts
		$\sum\limits_{i=1}^Na_i=ka+(N-k)b$,
		and
		\begin{equation}\label{eq4.3}
		\sum_{i=k+1}^{N}a_i-(N-k)b=-\sum_{i=1}^{k}a_i+ka.
		\end{equation}
		We know that, for $t\in[0,1]$
		$$f(t)=f(0)+tf'(0)+t^2\frac{f''(0)}{2}=tf'(0)+t^2\frac{f''(0)}{2}\geq 0.$$
		Hence, $f'(0)=\sum\limits_{i=1}^k \lambda_i\geq 0$.
		Furthermore, if  $\lambda_{1}+\lambda_{2}+\cdots+\lambda_{k}=0$,  we shall show $\lambda_{1}=\cdots =\lambda_{k}=0$, and $\lambda_{k+1}=\lambda_{k+2}=\cdots=\lambda_{N}=1$. Indeed, we have 
		\begin{eqnarray*}
			A_{\Lambda_t}&=&(a+(a_1-a)t,\cdots,a+(a_k-a)t,b+(a_{k+1}-b)t,\cdots,b+(a_N-b)t)\notag\\
			&=&(a+t\lambda_1,\cdots,a+t\lambda_k,b+t(\lambda_{k+1}-1),\cdots,b+t(\lambda_{N}-1)).
		\end{eqnarray*}
		By the definition of $f$, we have
		\begin{eqnarray}\label{eq4.6}
		\frac{1}{2}f''(0)&=&\sum_{1\leq i<j\leq k}\lambda_i\lambda_j+\sum_{1\leq i\leq k<j\leq N}\lambda_i(\lambda_j-1)
		+\sum_{k+1\leq i<j\leq N}(\lambda_i-1)(\lambda_j-1)\notag\\
		&=&\frac{1}{2}\left(\sum_{i=1}^k\lambda_i+\sum_{j=k+1}^N(\lambda_j-1)\right)^2-\frac{1}{2}\sum_{i=1}^k\lambda_i^2-\frac{1}{2}\sum_{j=k+1}^N(\lambda_j-1)^2\notag\\
		&=&-\frac{1}{2}\sum_{i=1}^k\lambda_i^2-\frac{1}{2}\sum_{j=k+1}^N(\lambda_j-1)^2,
		\end{eqnarray}
		where in the third identity we use the facts $\sum\limits_{i=1}^k\lambda_i=0$ and $\sum\limits_{j=k+1}^N(\lambda_j-1)=0$. 
		If  $\sum\limits_{i=1}^k\lambda_i=0$, we deduce that $f'(0)=0$. Since $f(t)\geq 0$ for $t\in [0,1]$, one has $f''(0)\geq 0$ and it implies  $\lambda_{1}=\cdots =\lambda_{k}=0$ and $\lambda_{k+1}=\cdots=\lambda_{N}=1$.
	\end{proof}

	\bproposition\label{critical}  Let $(M,g)$ be a compact Riemannian manifold of dimension $n\geq 3$. Suppose $\lambda_1\leq \cdots \leq \lambda_N$ are the eigenvalues of the curvature operator $\mathscr R$ on $\Lambda^2TM$ where $N=\frac{n(n-1)}{2}$. If $\lambda_1=\lambda_2=0$ and $\lambda_3=\cdots =\lambda_N>0$, then $n=3$ and $M$ is diffeomorphic to $\S^2\times \S^1$ or $\R\P^2\times \S^1$. 
	\eproposition
	\bproof Since $(M,g)$ has nonnegative curvature operator, it is well-known that (e.g. \cite[Theorem~7.34]{ChowLuNi2006})  its universal covering manifold $\left(\tilde{M}, \tilde{g}\right)$ is isometric to the product of the following:
	\bd \item 
	 Euclidean space,
	\item  closed symmetric space,
	\item closed Riemannian manifold with positive curvature operator,
	\item closed Kähler manifold with positive curvature operator on real $(1,1)$-forms.
	\ed 
	\vskip 1\baselineskip
	
	Suppose $\left(\tilde{M}, \tilde{g}\right)$ is isometric to a product manifold $(M_1,g_1)\times (M_2, g_2)$ where $\dim M_1=m$. Without loss of generality, we assume $\dim M_1\geq \dim M_2$, i.e.,  $m\geq n-m$. It is easy to see that the number of positive eigenvalues of the curvature operator of $(M_1,g_1)\times (M_2, g_2)$ is at most 
\beq \frac{m(m-1)}{2}+\frac{(n-m)(n-m-1)}{2}=\frac{n(n-1)}{2}-m(n-m). \eeq 
On the other hand,   $\left(\tilde{M}, \tilde{g}\right)$ has $$\frac{n(n-1)}{2}-2$$
 positive eigenvalues. Hence,  
 $$m(n-m)\leq 2.$$
 and we obtain $(m, n)=(2,3)$. When $n=3$, one has $M_1\cong \S^2$ and $M_2\cong \R$. Therefore, $M$ is diffeomorphic to $\S^2\times \S^1$ or $\R\P^2\times \S^1$. \\
 
  Suppose  $\left(\tilde M,\tilde g\right)$ is indecomposable, one can see clearly  that 
   $\left(\tilde M,\tilde g\right)$ must be an irreducible symmetric space of compact type. Hence, it is an Einstein manifold with  constant scalar curvature. Therefore,  $\lambda_3=\cdots =\lambda_N=a$ for some $a\in \R^+$. Let $\{\omega_1,\cdots, w_N\}$ be an orthonormal basis of $\Lambda^2T_x\tilde M$  such that 
  $$\mathscr R(\omega_1)=\mathscr R(\omega_2)=0,\ \ \ \mathscr R(\omega_\alpha)=a\omega_\alpha,\ \ \ \alpha=3,\cdots, N.$$
  For any orthonormal basis $\{e_1,\cdots, e_n\}$ of $T_x\tilde M$, for any $i,j$, we write 
  $$e_i\wedge e_j =\sum_{\alpha=1}^N c_{ij}^\alpha w_\alpha.$$
  Here we have  $c_{ij}^\alpha=-c_{ji}^\alpha $,  $c_{ii}^\alpha=0$ and for any $i\neq j$
  \beq \sum_{\alpha=1}^N\left(c_{ij}^\alpha\right)^2=1. \eeq 
   Moreover, the sectional curvature of it is
\beq \tilde R_{ijji}=\tilde R(e_i,e_j,e_j,e_i)=\tilde g(\mathscr R(e_i\wedge e_j), e_i\wedge e_j)=a\sum_{\alpha=3}^N\left(c_{ij}^\alpha\right)^2 \eeq 
  and the Ricci curvature is 
  \beq \tilde R_{jj}=a \sum_{i=1}^n\sum_{\alpha=3}^N\left(c_{ij}^\alpha\right)^2.  \eeq 
  Note that $(\tilde M, \tilde g)$ is an Einstein manifold with scalar curvature $2(N-2)a$. Hence,  the Einstein constant is $\frac{2(N-2)a}{n}=\frac{\left(n^2-n-4\right)a}{n}$ and so
  $$a \sum_{i=1}^n\sum_{\alpha=3}^N\left(c_{ij}^\alpha\right)^2=\frac{a\left(n^2-n-4\right)}{n}=a\left(n-1-\frac{4}{n}\right).$$
  On the other hand, for fixed $j$, 
 \beq  \sum_{i=1}^n\sum_{\alpha=3}^N\left(c_{ij}^\alpha\right)^2= \sum_{i=1,i\neq j}^n\sum_{\alpha=3}^N\left(c_{ij}^\alpha\right)^2=\sum_{i=1, i\neq j}^n\left(1-\sum_{\alpha=1}^2\left(c_{ij}^\alpha\right)^2\right)=(n-1)-\sum_{i=1}^n\sum_{\alpha=1}^2\left(c_{ij}^\alpha\right)^2.\eeq
  One has
  \beq \sum_{i=1}^n\sum_{\alpha=1}^2\left(c_{ij}^\alpha\right)^2=\frac{4}{n} \qtq{and} 0\leq  \sum_{\alpha=1}^2\left(c_{ij}^\alpha\right)^2\leq \frac{4}{n} \eeq 
  and  the sectional curvature estimate
  \beq \left(1-\frac{4}{n}\right)a\leq  \tilde R_{ijji}=a\sum_{\alpha=3}^N\left(c_{ij}^\alpha\right)^2\leq a.\eeq 
The curvature operator of $(\tilde M, \tilde g)$ has eigenvalues  $\lambda_1=\lambda_2=0$ and $\lambda_3=\cdots =\lambda_N>0$.
  \bd \item 
  For $n\geq 6$, the sectional curvature of $(\tilde M, \tilde g)$  is strictly $1/4$-pinched. By \cite{BrendleSchoen2009}, $\tilde M$ is diffeomeorphic to $\S^n$. It is impossible.
  \item For $n=5$, the sectional curvature of $\tilde M$ is strictly positive. On the other hand,
  homogeneous Ricci positive Einstein $5$-manifolds are classified in \cite[p.~4]{ADM1996}. By using that complete list,  it can not happen.
  
  \item For $n=4$, $(\tilde M,\tilde g)$ is an Einstein manifold with non-negative curvature operator and $3$-positive curvature operator. It is well-known (e.g.
  \cite[Theorem~1.2]{Wu2019}, \cite{BohmWilking2008}, \cite{Brendle2010}, \cite{CaoTran2018})  that such manifolds must be isometric to $\S^4$ or $\C\P^2$ with the canonical metrics up to scalings. It can not happen since the curvature operator of the Fubini-Study metric on $\C\P^2$ has eigenvalues $0,0, 1,1,1,3$. 
  \ed 
 The proof is completed. 
	\eproof 
	
	\vskip 1\baselineskip

		\bcorollary\label{alpha2cone} For $1\leq k\leq N-2$,	
		if $\Lambda\in{\Gamma}_2^+(\alpha_k)$, then
	\begin{equation}
	\lambda_{1}+\lambda_{2}+\cdots+\lambda_k>0.
	\end{equation}
	\ecorollary
	\begin{proof} By Lemma \ref{lem1},  we need to rule out the case $\lambda_{1}=\lambda_{2}=\cdots=\lambda_k=0$ and $\lambda_{k+1}=\lambda_{k+2}=\cdots=\lambda_{N}>0$. Actually, in this case if we assume $T=\sum\limits_{i=1}^N \lambda_i=N-k$, then $\lambda_{k+1}=\lambda_{k+2}=\cdots=\lambda_{N}=1$ and 
		$$\Lambda_{\alpha_k}=(a,\cdots,a,b,\cdots, b)\in \R^k\times \R^{N-k}$$
		as we computed in Example \ref{examplek}. However,  $\sigma_2(A_{\Lambda_0})=0$.
	\end{proof}

	\vskip 1\baselineskip 
	
	\noindent \emph{Proof of Theorem \ref{thm0}.} It follows from Lemma \ref{lem1} and Corollary \ref{alpha2cone}. \qed

	\vskip 1\baselineskip

\noindent	There is a K\"ahler version of Theorem \ref{thm0}.
	\begin{thm}\label{thm00}
		Let $(M,g)$ be K\"ahler manifold with complex dimension $n\geq 2$ and
		$$\rho_1\leq \cdots \leq \rho_{n^2}$$ be the eigenvalues of the K\"ahler curvature operator  $\mathscr R$ at $x\in M$. Fix $k\in\{1,2,\cdots,n^2-1\}$.
		\bd \item If $g\in\Gamma_2^+(\beta_k)$ at $x\in M$, then $$\rho_{1}(x)+\rho_{2}(x)+\cdots+\rho_k(x)>0.$$
			\item If $g\in \bar \Gamma_2^+(\beta_k)$ at $x\in M$, then one of the followings holds:
			\bd  \item $\rho_{1}(x)+\rho_{2}(x)+\cdots+\rho_k(x)>0$.
				\item 	$\rho_{1}(x)=\rho_{2}(x)=\cdots=\rho_k(x)=0$ and $\rho_{k+1}(x)=\rho_{k+2}(x)=\cdots=\rho_{N}(x)\geq0$.
			\ed
		\ed
	\end{thm}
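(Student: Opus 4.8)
The plan is to deduce Theorem \ref{thm00} directly from Lemma \ref{lem1} and Corollary \ref{alpha2cone}, by noting that these two statements --- and, crucially, their proofs --- involve no Riemannian geometry at all. Stripped of their geometric dressing they read: for \emph{any} integer $N\ge 3$ and any $k$ with $1\le k\le N-2$, writing $\alpha_k=\frac1N-\frac1N\sqrt{\frac{k}{(N-1)(N-k)}}$, if a vector $\Lambda=(\lambda_1\le\cdots\le\lambda_N)\in\R^N$ lies in $\bar\Gamma_2^+(\alpha_k)$ then $\lambda_1+\cdots+\lambda_k\ge 0$, with equality forcing $\lambda_1=\cdots=\lambda_k=0$ and $\lambda_{k+1}=\cdots=\lambda_N\ge 0$; and if $\Lambda\in\Gamma_2^+(\alpha_k)$ then $\lambda_1+\cdots+\lambda_k>0$. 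Inspecting the proofs, the only inputs are: the inequality $N\alpha_k<1$; the convexity of the cone $\bar\Gamma_2^+$; the explicit verification (Example \ref{examplek}) that the auxiliary point $\Lambda_0=(0,\cdots,0,1,\cdots,1)\in\R^k\times\R^{N-k}$, shifted by $\alpha_k T$, lies on $\partial\bar\Gamma_2^+$ with $\sigma_2=0$; and the second-order Taylor expansion of $t\mapsto\sigma_2$ along the segment joining that point to $\Lambda_{\alpha_k}$.

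For the K\"ahler analog, the K\"ahler curvature operator $\mathscr R|_{\mathfrak u(n)}$ is a self-adjoint endomorphism of the real inner-product space $\mathfrak u(n)\cong\R^{n^2}$, so its $n^2$ real eigenvalues $\rho_1\le\cdots\le\rho_{n^2}$ form a vector $\Lambda=(\rho_1,\cdots,\rho_{n^2})\in\R^{n^2}$, and by definition $\beta_k$ is precisely $\alpha_k$ evaluated at $N=n^2$; hence $\Gamma_2^+(\beta_k)$ and $\bar\Gamma_2^+(\beta_k)$ are literally the cones above with $N=n^2$. Therefore, for every $k$ with $1\le k\le n^2-2$, the two facts recalled above apply with $N$ replaced by $n^2$, $\lambda_i$ by $\rho_i$ and $\alpha_k$ by $\beta_k$, and yield at once both parts of Theorem \ref{thm00}; the hypothesis $N\alpha_k<1$ becomes $n^2\beta_k=1-\sqrt{k/((n^2-1)(n^2-k))}<1$, which is clear.

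It remains to handle the extremal index $k=n^2-1$, for which $\beta_{n^2-1}=0$, so that $\Gamma_2^+(\beta_{n^2-1})=\Gamma_2^+$ and $\bar\Gamma_2^+(\beta_{n^2-1})=\bar\Gamma_2^+$. Put $s=\sum_i\rho_i=\sigma_1(\Lambda)$, so that $2\sigma_2(\Lambda)=s^2-\sum_i\rho_i^2$. If $g\in\Gamma_2^+(\beta_{n^2-1})$ at $x$, then $s>0$ and $\sum_i\rho_i^2<s^2$, so $\rho_{n^2}^2\le\sum_i\rho_i^2<s^2$ gives $\rho_{n^2}<s$, i.e. $\rho_1+\cdots+\rho_{n^2-1}=s-\rho_{n^2}>0$. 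If only $g\in\bar\Gamma_2^+(\beta_{n^2-1})$ at $x$, the same chain with $\ge$ in place of $>$ gives $\rho_1+\cdots+\rho_{n^2-1}\ge 0$; in the borderline case where this sum is $0$ we have either $s=0$, whence $\sum_i\rho_i^2\le 0$ and $\rho_1=\cdots=\rho_{n^2}=0$, or $s>0$ with $\rho_{n^2}=s$ and $\sum_{i<n^2}\rho_i=0$, and then $s^2\ge\sum_i\rho_i^2=s^2+\sum_{i<n^2}\rho_i^2$ forces $\rho_1=\cdots=\rho_{n^2-1}=0$ and $\rho_{n^2}\ge 0$, so that in either case the second alternative of the theorem holds. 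I do not anticipate a genuine obstacle here: the entire content is the observation that Lemma \ref{lem1} and Corollary \ref{alpha2cone} are dimension-free facts about $\Gamma_2^+$-cones, so that only the identification $\beta_k=\alpha_k|_{N=n^2}$ and the elementary treatment of the degenerate index $k=n^2-1$ need to be written out; no new analytic or geometric idea is required.
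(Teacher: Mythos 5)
Your proposal is correct and takes essentially the same route as the paper: the paper offers no separate proof of Theorem~\ref{thm00}, the intent clearly being that Lemma~\ref{lem1} and Corollary~\ref{alpha2cone} are purely convex--algebraic statements about $\bar\Gamma_2^+$ and $\Gamma_2^+$ in $\R^N$, applied here with $N=n^2$ and $\alpha_k$ replaced by $\beta_k=\alpha_k|_{N=n^2}$. Your one genuine addition is the direct treatment of the index $k=n^2-1$, which the lemmas (stated only for $k\le N-2$) do not formally cover and the paper leaves implicit; your elementary argument there --- using $2\sigma_2=s^2-\sum\rho_i^2$ with $s=\sigma_1$ to get $\rho_{n^2}\le s$ and hence $\rho_1+\cdots+\rho_{n^2-1}=s-\rho_{n^2}\ge 0$, with equality forced only in the degenerate configurations --- is correct and closes that small gap cleanly.
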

	
\noindent	By using similar arguments as in the proof of Proposition \ref{critical}, we obtain
	\bproposition\label{criticalkahler}  	Let $(M,g)$ be a compact  K\"ahler manifold with complex dimension $n\geq 2$ and
	$\rho_1\leq \cdots \leq \rho_{n^2}$ be the eigenvalues of the K\"ahler curvature operator  $\mathscr R$. If $\rho_1=\rho_2=0$ and $\rho_3=\cdots =\rho_{n^2}>0$, then $n=2$ and $M$ is biholomorphic to $\C\P^1\times \C\P^1$.
\eproposition	

\bproof   Since the underlying Riemannian manifold of $(M,g)$ has nonnegative curvature operator, the holomorphic bisectional curvature of $(M,g)$ is nonnegative. By \cite{Mok1988},
 its universal covering manifold $\left(\tilde{M}, \tilde{g}\right)$ is isometric to the product of the following:
	\bd \item $\C^k$,
	\item   irreducible compact Hermitian symmetric space of rank $\geq 2$,
	\item $\C\P^k$.
	\ed 
	
	Suppose $\left(\tilde{M}, \tilde{g}\right)$ is isometric to a product manifold $(M_1,g_1)\times (M_2, g_2)$ where $\dim_\C M_1=m$. Without loss of generality, we assume $\dim_\C M_1\geq \dim_\C M_2$, i.e.,  $m\geq n-m$. It is easy to see that the number of positive eigenvalues of the K\"ahler curvature operator of $(M_1,g_1)\times (M_2, g_2)$ is at most 
	\beq m^2+(n-m)^2=n^2-2m(n-m). \eeq 
	On the other hand,   $\left(\tilde{M}, \tilde{g}\right)$ has $n^2-2$
	positive eigenvalues. Hence,  
	$$m(n-m)\leq 1.$$
	and we obtain $(m, n)=(1,2)$. It is easy to rule out the case $\C\P^1\times \C$ and we deduce that  $M$ is biholomorphic to $\C\P^1\times \C\P^1$. On the other hand, the product metric on $\C\P^1\times \C\P^1$ is  just a multiply of the product round metric on $\S^2\times \S^2$ which has the desired curvature property.  \\
	
	 Suppose  $\left(\tilde M,\tilde g\right)$ is indecomposable, one can see clearly   that 
	$\left(\tilde M,\tilde g\right)$ must be an irreducible Hermitian symmetric space of compact type. Hence, it is a K\"ahler-Einstein manifold and  $\lambda_3=\cdots =\lambda_{n^2}=a$ for some $a\in \R^+$. By using a similar argument as in the proof of  Proposition \ref{critical}, we deduce that the holomorphic bisectional curvature has uniform bounds
	 \beq \left(1-\frac{2}{n}\right)a\leq \mathrm{HBSC}(\tilde M,\tilde g)\leq a. \label{kahler}\eeq 
	For $n\geq 3$, by \cite{SiuYau1980}, $\tilde M$ is biholomorphic to $\C\P^n$. For $n=2$,  K\"ahler-Einstein Fano surfaces are classified in \cite{Tia90} (see also the exposition in \cite{Tos12}). None of them can match the curvature condition.

Let's describe the proof of (\ref{kahler}) briefly. Let  $\{\omega_1,\cdots \omega_{n^2}\}$ be an orthonormal basis of $T^{1,0}_x\tilde M\wedge T^{0,1}_xM$	
 such that 
$$\mathscr R(\omega_1)=\mathscr R(\omega_2)=0,\ \ \ \mathscr R(\omega_\alpha)=a\omega_\alpha,\ \ \ \alpha=3,\cdots, n^2.$$
For any holomorphic  orthonormal  frame $\{e_1,\cdots, e_n\}$  of $T^{1,0}_x\tilde M$, we have the expansion
$e_i\wedge \bar e_j =\sum\limits_{\alpha=1}^{n^2} c_{i\bar j}^\alpha w_\alpha$ where $\sum\limits_{\alpha=1}^{n^2} |c_{i\bar j}^\alpha|^2=1 $.
 The holomorphic bisectional curvature of $(\tilde M, \tilde g)$ is
\beq \tilde R_{i\bar jj\bar i}=\tilde R(e_i,\bar e_j,e_j,\bar e_i)=\tilde g(\mathscr R(e_i\wedge \bar e_j), e_i\wedge\bar e_j)=a\sum_{\alpha=3}^{n^2}|c_{i\bar j}^\alpha|^2 \eeq 
and the Ricci curvature is 
\beq \tilde R_{j\bar j}=a \sum_{i=1}^n\sum_{\alpha=3}^{n^2}|c_{i\bar j}^\alpha|^2=\frac{(n^2-2)a}{n}=a\left(n-\frac{2}{n}\right)  \eeq 
which implies
\beq \sum_{i=1}^n\sum_{\alpha=1}^2|c_{i\bar j}^\alpha|^2=\frac{2}{n} \qtq{and} 0\leq  \sum_{\alpha=1}^2|c_{i\bar j}^\alpha|^2\leq \frac{2}{n} .\eeq 
The proof is completed.
	\eproof 
	
	\vskip 1\baselineskip
	
		\noindent \emph{Proof of Theorem \ref{maintheorem1}.} If $g\in \Gamma_2^+(\alpha_2)$,  by Theorem \ref{thm0}, we have $$\lambda_1+\lambda_2>0$$ and so the curvature operator $\mathscr R$ of $(M,g)$ is $2$-positive. By the classical result of B\"ohm-Wilking \cite{BohmWilking2008}, $M$ is diffeomorphic to a spherical space form. \qed
		
	\vskip 1\baselineskip

	\noindent \emph{Proof of Theorem \ref{maintheorem2}.} If  $g\in \bar\Gamma_2^+(\alpha_2)$, by  Theorem \ref{thm0}, we achieve the pointwise statement:  at each point $x\in M$, one has either
	\bd \item   $\lambda_{1}(x)+\lambda_{2}(x)>0$;  or
	\item	$\lambda_{1}(x)=\lambda_{2}(x)=0$ and $\lambda_{3}(x)=\lambda_{4}(x)=\cdots=\lambda_{N}(x)\geq0$.
	\ed
	In particular, the curvature operator $\mathscr R$ is $2$-nonnegative.
There are three cases.

\bd \item[(A)] The curvature operator $\mathscr R$ is $2$-quasi-positive, i.e.   $\lambda_{1}(x)+\lambda_{2}(x)\geq 0$ for all $x\in M$ and there exists some $x_0\in M$ such that $\lambda_{1}(x_0)+\lambda_{2}(x_0)>0$.
\item[(B)] $\mathscr R$ is flat, i.e. $\lambda_1(x)=\cdots =\lambda_N(x)=0$ for all $x\in M$.

\item[(C)] For every $x\in M$,  $\lambda_1(x)=\lambda_2(x)=0$ and $\lambda_3(x)=\cdots =\lambda_N(x)>0$.
\ed 	
	For case  (A), by using the method of B\"ohm-Wilking \cite{BohmWilking2008},  Ni-Wu proved in \cite[Corollary~2.3]{NiWu2007} that  along the Ricci flow, the curvature operator $\mathscr R(g(t))$ of $g(t)$ is $2$-positive if $t>0$. In particular, by \cite{BohmWilking2008} again,  $M$ is diffeomorphic to a spherical space form.
	It is clear that $(M,g)$ is flat when case (B) happens.
	
	 For case (C),  we treat it in Proposition \ref{critical} and conclude that $n=3$ and $M$ is diffeomorphic to $\S^2\times \S^1$ or $\R\P^2\times \S^1$.  \qed

	\vskip 1\baselineskip


	\noindent  	\noindent \emph{Proof of Theorem \ref{maintheorem3}.} If $g\in \Gamma_2^+(\alpha_k)$ for some $1\leq k\leq N-1$, by  Theorem \ref{thm0}, the cuvrature operator $\mathscr R$ of $(M,g)$ is $k$-positive. By \cite[Theorem~A]{PetersenWink2021a}, we have 
	\bd \item 
	if $1\leq k\leq\lceil\frac{n}{2}\rceil$, then curvature operator is $\lceil\frac{n}{2}\rceil$-positive and 
	\[b_1(M)=\cdots=b_{\lfloor\frac{n}{2}\rfloor}(M)=b_{\lceil\frac{n}{2}\rceil}(M)=\cdots=b_{n-1}(M)=0,\]
	i.e., $b_p(M)=0$ for all $1\leq p\leq n-1$;
	\item 
	if $\lceil\frac{n}{2}\rceil+1\leq k\leq n-1$,  one concludes that $b_1(M)=\cdots=b_{n-k}(M)=0$ and $b_{k}(M)=\cdots=b_{n-1}(M)=0$.\ed  \qed 
	
	\vskip 1\baselineskip 
	
	\noindent We need an elementary result in the proof of Theorem \ref{main4} and Theorem \ref{maintheorem5}.
	\blemma\label{transition}  Let $(M, g)$ be a Kähler manifold of complex dimension $n$. If the Kähler curvature operator $\mathscr R$ is $2$-positive (resp. $2$-quasi-positive, $2$-nonnegative), then $(M,g)$ has positive (resp. quasi-positive, nonnegative) orthogonal bisectional curvature.
	\elemma 
	
	\bproof  Suppose  that $A=\left(a_{i j}\right)_{m \times m}$ is a real symmetric matrix and the sum of the smallest two eigenvalues of $A$ is positive (resp. nonnegative), then  for any $i \neq j$,  $a_{i i}+a_{j j}>0$ (resp.  $\geq 0$). Indeed, if $\lambda_1 \leq \lambda_2 \leq \cdots \leq \lambda_m$ are the eigenvalues of $A$, then
	$$
\lambda_1+\lambda_2=\inf _{x, y \in \mathbb{R}^m}\{A(x, x)+A(y, y)\ |\ | x|=| y \mid=1,\langle x, y\rangle=0\}.$$
	Let $\left\{\xi_i\right\}$ be the standard basis of $\mathbb{R}^m$.  Then
	$a_{i i}+a_{j j}=A\left(\xi_i, \xi_i\right)+A\left(\xi_j, \xi_j\right) \geq \lambda_1+\lambda_2$.\\
	
	Choose an orthonormal basis $\left\{u_1,  \cdots, u_{2 n}\right\}$ at $T_x M$ such that $J u_i=$ $u_{n+i}$ for $1 \leq i \leq n$. Set $e_i=\frac{1}{\sqrt{2}}\left(u_i-\sqrt{-1} J u_i\right)$, then $\left\{e_1, e_2, \cdots, e_n\right\}$ is a unitary basis of $T_x^{1,0} M$.
	A straightforward computation shows
\be  g\left(\mathscr R\left(e_i \wedge \bar{e}_j+e_j \wedge \bar{e}_i\right), e_i \wedge \bar{e}_j+e_j \wedge \bar{e}_i\right) &=&R_{i \bar{j} j \bar{i}}+ R_{i \bar{j} i \bar{j}}+R_{j \bar{i} {j} \bar{i}}+R_{j \bar{i} i \bar{j}}\\& =& R_{i \bar{j} i \bar{j}}+2 R_{i\bar{i} {j} \bar{j}}+R_{j \bar{i} j \bar{i}}\ee 
	and
	\be 
	 g\left(\mathscr R\left( e_j \wedge \bar{e}_i- e_i \wedge \bar{e}_j\right),  e_j \wedge \bar{e}_i- e_i \wedge \bar{e}_j\right)&=& R_{j \bar{i}i \bar{j}}-R_{j \overline{i }j \bar{i}}-R_{i \bar{j} i \bar{j}} +R_{i \bar{j} {j} \bar{i}}\\
&	= & -R_{j \bar{i} {j} \bar{i}}+2 R_{i \bar{i} j \bar{j}}-R_{i \bar{j} i \bar{j}}.
\ee
Hence, for $i\neq j $,	
\be 4R_{i\bar i j \bar j}&=& g\left(\mathscr R\left(e_i \wedge \bar{e}_j+e_j \wedge \bar{e}_i\right), e_i \wedge \bar{e}_j+e_j \wedge \bar{e}_i\right)\\&&+ g\left(\mathscr R\left(e_j \wedge \bar{e}_i- e_i \wedge \bar{e}_j\right),  e_j \wedge \bar{e}_i- e_i \wedge \bar{e}_j\right). \ee
	Hence, the conclusion follows.
	\eproof

	\noindent \emph{Proof of Theorem \ref{main4}.} If $g\in \Gamma_2^+(\beta_2)$, by Theorem \ref{thm00}, the K\"ahler curvature operator $\mathscr R$  is $2$-positive. By Lemma \ref{transition}, compact  Kähler manifolds with $2$-positive K\"ahler curvature operator have positive
	orthogonal bisectional curvature. By  \cite{GuZhang2010},   they are biholomorphic to $\C\P^n$. \qed

	\vskip 1\baselineskip

	\noindent\emph{Proof of Theorem \ref{maintheorem5}.} By Theorem \ref{thm00},  the K\"ahler curvature operator $\mathscr R$ is $2$-nonnegative. There are three cases.
	
	\bd \item[(A)] The K\"ahler curvature operator $\mathscr R$ is $2$-quasi-positive, i.e.   $\rho_{1}(x)+\rho_{2}(x)\geq 0$ for all $x\in M$ and there exists some $x_0\in M$ such that $\rho_{1}(x_0)+\rho_{2}(x_0)>0$.
	\item[(B)] $\mathscr R$ is flat, i.e. $\rho_1(x)=\cdots =\rho_N(x)=0$ for all $x\in M$.
	
	\item[(C)] For every $x\in M$,  $\rho_1(x)=\rho_2(x)=0$ and $\rho_3(x)=\cdots =\rho_N(x)>0$.
	\ed 	
For case (A), if the K\"ahler curvature operator is $2$-quasi-positive, by Lemma \ref{transition},  the orthogonal  bisectional curvature is quasi-positive.  By \cite{GuZhang2010},  $M$ is biholomorphic to $\C\P^n$. For case (B), it is a flat torus. For case (C), by Proposition \ref{criticalkahler}, $M$ is biholomorphic to $\C\P^1\times \C\P^1$.	\qed

	

	\vskip 2\baselineskip


\begin{thebibliography}{BCHM}
	
	\bibitem[ADM96]{ADM1996}
	Dmitri V. Alekseevsky, Isabel Dotti and Carlos J. Ferraris.
	\newblock Homogeneous Ricci positive 5-manifolds.
	\newblock {\em Pacific J. Math.}, 175(1):1--12, 1996.
	
	\bibitem[Boc46]{Bochner46} 
	Salomon Bochner.  Vector fields and Ricci curvature, {\emph{Bull. Amer. Math. Soc}}. 52: 776--797,  1946
	
	\bibitem[BW08]{BohmWilking2008}
	Christoph B\"ohm and Burkhard Wilking.
	\newblock Manifolds with positive curvature operators are space forms. 
	\newblock {\em Ann. of Math.}, 167(2):1079--1097, 2008.
	

	
	\bibitem[Bre10]{Brendle2010} 
	Simon Brendle. 
	\newblock Einstein manifolds with nonnegative isotropic curvature are locally symmetric. 
	\newblock {\em Duke Math. J.}, 151(1):1--21, 2010.
	
	\bibitem[Bre19]{Brendle2019} 
	Simon Brendle. 
	\newblock Ricci flow with surgery on manifolds with positive isotropic curvature. 
	\newblock {\em Ann. of Math.}, 190(2):465--559, 2019.
	
	\bibitem[BS09]{BrendleSchoen2009} 
	Simon Brendle and Richard Schoen. 
	\newblock Manifolds with 1/4-pinched curvature are space forms. 
	\newblock {\em J. Am. Math. Soc.}, 22(1):287--307, 2009.
	
	\bibitem[CTZ12]{ChenTangZhu2012}
	Bing-Long Chen, Siu-Hung Tang and Xi-Ping Zhu. 
	\newblock Complete classification of compact four-manifolds with positive isotropic curvature. 
	\newblock {\em J. Differ. Geom.}, 91(1):41--80, 2012.
	
	\bibitem[CZ06]{ChenZhu2006} 
	Bing-Long Chen and Xi-Ping Zhu. 
	\newblock Ricci flow with surgery on four-manifolds with positive isotropic curvature. 
	\newblock {\em J. Differ. Geom.}, 74(2):177--264, 2006.
	
	\bibitem[Che91]{ChenH1991} 
	Haiwen Chen. 
	\newblock Pointwise 1/4-pinched 4-manifolds. 
	\newblock {\em Ann. Global Anal. Geom.}, 9(2):161--176, 1991.
	
	\bibitem[Che07]{Che07} Xiuxiong Chen. 
	On K\"ahler manifolds with positive orthogonal bisectional curvature. {\emph{Adv. Math.}},  215: 427–445, 2007.
	
	
	\bibitem[CLN06]{ChowLuNi2006}
	Bennett Chow, Peng Lu and Lei Ni.	
	\newblock  Hamilton's Ricci flow. 
	\newblock {\em Graduate Studies in Mathematics, AMS Press}, 2006.
	
	\bibitem[CGT23]{CGT23}Xiaodong Cao, Matthew J.  Gursky and Hung Tran.
Curvature of the second kind and a conjecture of Nishikawa. {\emph{Comment. Math. Helv.}},  98: 195--216, 2023.
	
	\bibitem[CT18]{CaoTran2018}
	Xiaodong Cao and Hung Tran.
	\newblock Einstein four-manifolds of pinched sectional curvature.
	\newblock {\em Adv. Math.}, 335:322--342, 2018.
	
	
	\bibitem[FLW17]{FengLiuWan2017}
	Huitao Feng, Kefeng Liu and Xueyuan Wan.
	\newblock Compact {K}\"{a}hler manifolds with positive orthogonal bisectional
	curvature.
	\newblock {\em Math. Res. Lett.}, 24(3):767--780, 2017.
	
	
	\bibitem[GM75]{GM75}
	Sylvestre
	Gallot and Daniel Meyer.
	Opérateur de courbure et laplacien des formes différentielles d'une variété riemannienne. {\em{J. Math. Pures Appl.}}, 54: 259--284, 1975
	
	
	
	\bibitem[Gu09]{Gu2009} 
	Hui-Ling Gu. 
	\newblock A new proof of Mok's generalized Frankel conjecture theorem. 
	\newblock {\em Proc. Amer. Math. Soc.}, 137(3):1063--1068, 2009.
	
	\bibitem[GZ10]{GuZhang2010} 
	Hui-Ling Gu and Zhu-Hong Zhang. 
	\newblock An extension of Mok's theorem on the generalized Frankel conjecture. 
	\newblock {\em Sci. China Math.}, 53(5):1253--1264, 2010.
	
	\bibitem[GLW04]{GuanLinWang2004}
	Pengfei Guan, Chang-Shou Lin and Guofang Wang.
	\newblock Application of the method of moving planes to conformally invariant equations.
	\newblock {\em Math. Z.}, 247:1--19, 2004.
	
	\bibitem[GLW05]{GuanLinWang2005} 
	Pengfei Guan, Chang-Shou Lin and Guofang Wang. 
	\newblock Schouten tensor and some topological properties. 
	\newblock {\em Commun. Anal. Geom.}, 13(5):887--902, 2005.
	
	\bibitem[GVW02]{GuanViaclovskyWang2002}
	Pengfei Guan, Jeff Viaclovsky and Guofang Wang.
	\newblock Some properties of the Schouten tensor and applications to conformal geometry. 
	\newblock {\em Trans. Am. Math. Soc.}, 355(3):925--933, 2002.
	
	\bibitem[GW04]{GuanWang2004} 
	Pengfei Guan and Guofang Wang. 
	\newblock Geometric inequalities on locally conformally flat manifolds. 
	\newblock {\em Duke Math. J.}, 124(1):177--212, 2004.
	
	\bibitem[Gur94]{Gursky1994} 
	Matthew J. Gursky. 
	\newblock Locally conformally flat four- and six-manifolds of positive scalar curvature and positive Euler characteristic. 
	\newblock {\em Indiana Univ. Math. J.}, 43:747--774, 1994.
	
	\bibitem[Ham82]{Hamilton1982} 
	Richard S. Hamilton.
	\newblock Three-manifolds with positive Ricci curvature. 
	\newblock {\em J. Differ. Geom.}, 17:255--306, 1982.
	
	\bibitem[Ham86]{Hamilton1986} 
	Richard S. Hamilton. 
	\newblock Four-manifolds with positive curvature operator. 
	\newblock {\em J. Differ. Geom.}, 24:153--179, 1986.
	
	\bibitem[Ham97]{Hamilton1997} 
	Richard S. Hamilton. 
	\newblock Four-manifolds with positive isotropic curvature. 
	\newblock {\em Commun. Anal. Geom.}, 5(1):1--92, 1997.
	
	\bibitem[Hua23]{Huang2023} 
	Hong Huang. 
	\newblock Classification of compact manifolds with positive isotropic curvature. 
	\newblock {\em arXiv:2305.18154v4}.
	
	
	\bibitem[Li23]{Li23} Xiaolong Li.
K\"ahler manifolds and the curvature operator of the second kind. {\emph{Math. Z.}},  303: Paper No. 101, 26 pp, 2023.
	
	\bibitem[Mey71]{Meyer1971} 
	Daniel  Meyer.
	\newblock Sur les vari\'{e}t\'{e}s riemanniennes \`{a} op\'{e}rateur de courbure positif. 
	\newblock {\em Comptes Rendus Acad. Sci. Paris S\'{e}r. A--B}, 272 A482--A485, 1971.
	
	\bibitem[MM88]{MicallefMoore1988} 
	Mario J. Micallef and John D. Moore.
	\newblock Minimal two-spheres and the topology of manifolds with positive curvature on totally isotropic two-planes.
	\newblock {\em Ann. of Math.}, 127(1):199--227, 1988.
	
		\bibitem[MW93]{MicallefWang1993}
	Mario J. Micallef and McKenzie Y. Wang.  
	\newblock Metrics with nonnegative isotropic curvature.
	\newblock {\em Duke Math. J.}, 72(3):649--672, 1993.
	
	\bibitem[Mok88]{Mok1988} 
	Ngaiming Mok. 
	\newblock The uniformization theorem for compact K\"ahler manifolds of non-negative holomorphic bisectional curvature. 
	\newblock {\em J. Differ. Geom.}, 27:179--214, 1988.
	
	\bibitem[Mori79]{Mori1979} 
	Shigefumi Mori. 
	\newblock Projective manifolds with ample tangent bundles.
	\newblock {\em Ann. of Math.}, 110(3):593--606, 1979.
	

	
	\bibitem[NW07]{NiWu2007}
	Lei Ni and Baoqiang Wu.
	\newblock Complete manifolds with nonnegative curvature operator.
	\newblock {\em Proc. Amer. Math. Soc.}, 135(9):3021--3028, 2007.
	
	\bibitem[NZ18]{NiZheng20a}
	Lei Ni and Fangyang Zheng. 
	\newblock {Comparison and vanishing theorems for K\"ahler manifolds}. 
	\newblock {\em Calc. Var. Partial Differential Equations}, 57(6):151, 2018.
	
	\bibitem[NZ22]{NiZheng20b} 
	Lei Ni and Fangyang Zheng. 
	\newblock Positivity and Kodaira embedding theorem.
	\newblock {\em Geom. Topol.}, 26:2491--2505, 2022.
	
	
	\bibitem[PW21a]{PetersenWink2021a}
	Peter Petersen and Matthias Wink.
	\newblock New curvature conditions for the Bochner technique.
	\newblock {\em Invent. math.}, 224:33--54, 2021.
	
	\bibitem[PW21b]{PetersenWink2021b} 
	Peter Petersen and Matthias Wink.
	\newblock Vanishing and estimation results for Hodge numbers.
	\newblock {\em J. reine angew. Math.}, 780:197--219, 2021.
	
	
	\bibitem[SY80]{SiuYau1980} 
	Yum-Tong Siu and Shing-Tung Yau. 
	\newblock Compact K\"ahler manifolds of positive bisectional curvature. 
	\newblock {\em Invent. math.}, 59(2):189--204, 1980.
	
	
		\bibitem[Tac74]{Tachibana74} Shun-ichi Tachibana. 
	\newblock A theorem of Riemannian manifolds of positive curvature operator. 
	\newblock {\em Proc. Japan Acad.}, 50(4):301--302, 1974.

	\bibitem[Tan67]{Tani1974} Mariko Tani. 
	\newblock On a conformally flat Riemannian space with positive Ricci curvature.
	\newblock {\em Tohoku Math. J.}, 19(2):227--231, 1967.
	

	
	\bibitem[Tia90]{Tia90}  Gang  Tian.  On Calabi’s conjecture for complex surfaces with positive first Chern class. {\emph{Invent. Math}}., 101: 101--172, 1990.
	
		\bibitem[Tos12]{Tos12}Valentino Tosatti. 
	K\"ahler-Einstein metrics on Fano surfaces. {\emph{Expo. Math.}}, 30:11--31, 2012.
	
	
	\bibitem[Wil13]{Wil13} Burkhard Wilking.
	A Lie algebraic approach to Ricci flow invariant curvature conditions and Harnack inequalities. {\emph{J. Reine Angew. Math.}}, 679: 223--247, 2013.
	
	\bibitem[Wu19]{Wu2019}	
	Peng Wu.
	\newblock	Einstein four-manifolds of three-nonnegative curvature operator.
	\newblock {\em Math. Z.}, 293:1489--1511, 2019.
	
	
	
	\bibitem[Yang18]{Yang2018}
	Xiaokui Yang.
	\newblock RC-positivity, rational connectedness and Yau's conjecture.
	\newblock {\em Camb. J. Math.}, 6(2):183--212, 2018.

		\bibitem[Yau82]{yaup} Shing-Tung Yau.  \emph{Problem section}, In Seminar on Differential Geometry, Ann. of Math Stud., 102: 669--706,  1982.
	

	
\end{thebibliography}
\end{document}